\documentclass[11pt, amsfonts]{amsart}


\usepackage{amsmath,amssymb,amsthm}
\usepackage[all]{xy}
\usepackage[dvipdfm,colorlinks=true]{hyperref}


\textwidth 6in
\oddsidemargin .25in
\evensidemargin .25in
\parskip .05in
\parindent .0pt

\numberwithin{equation}{section}

\SelectTips{eu}{12}


\newtheorem{theorem}{Theorem}[section]
\newtheorem{proposition}[theorem]{Proposition}
\newtheorem{lemma}[theorem]{Lemma}
\newtheorem{corollary}[theorem]{Corollary}

\theoremstyle{definition}

\theoremstyle{remark}


\newcommand{\Z}{\mathbb{Z}}
\newcommand{\g}{\mathcal{G}}
\newcommand{\map}{\operatorname{Map}}


\title[The mod-$p$ homology of the classifying spaces of certain gauge groups]{The mod-$p$ homology of the classifying spaces of certain gauge groups}

\author{Daisuke Kishimoto}
\address{Department of Mathematics, Kyoto University, Kyoto, 606-8502, Japan}
\email{kishi@math.kyoto-u.ac.jp}

\author{Stephen Theriault}
\address{Mathematical Sciences, University of Southampton, Southampton SO17 1BJ, United Kingdom}
\email{s.d.theriault@soton.ac.uk}

\subjclass[2010]{Primary 55R40, Secondary 81T13.}
\keywords{gauge group, classifying space, homology}

\begin{document}

\baselineskip.525cm

\maketitle

\begin{abstract}
  Let $G$ be a compact connected simple Lie group of type $(n_{1},\ldots,n_{l})$, where $n_{1}\le\cdots \le n_{l}$. Let $\g_k$ be the gauge group of the principal $G$-bundle over $S^4$ corresponding to $k\in\pi_3(G)\cong\Z$. We calculate the mod-$p$ homology of the classifying space $B\g_k$ provided that $n_{l}<p-1$. 
\end{abstract}


\section{Introduction}
\label{sec:intro}

Let $G$ be a Lie group and $P\to X$ be a principal $G$-bundle over a manifold $X$. Automorphisms of $P$ are by definition $G$-equivariant self-maps of $P$ covering the identity map of $X$. The topological group of automorphisms of $P$ is called the \emph{gauge group} of $P$; we denote this group by $\g(P)$. The classifying space of $\g(P)$ is denoted by $B\g(P)$.

Gauge groups are fundamental in modern physics and geometry. Since the classifying space $B\g(P)$ is homotopy equivalent to the moduli space of connections on $P$ as in \cite{AB}, the topology of gauge groups over $4$-manifolds and their classifying spaces has proved to be of immense value in studying diffeomorphism structures on $4$-manifolds \cite{D}, Yang-Mills theory \cite{AJ}, and invariants of $3$-manifolds \cite{F}. An important problem is to calculate the mod-$p$ homology of $B\g(P)$ for a prime $p$ when the base space $X$ of $P$ is a closed $4$-manifold, as this would give rise to characteristic classes that could be used in geometric and physical contexts. A certain subring of the mod-$2$ cohomology of $B\g(P)$ was studied by Masbaum \cite{Ma} when $G=SU(2)$ and $X$ is a simply-connected closed $4$-manifold, and Choi \cite{C} has some partial results on the mod-$2$ homology for $G=Sp(n)$ and $X=S^4$. However, there is no complete calculation of the mod-$2$ homology or cohomology of $B\g(P)$ when $X$ is a closed $4$-manifold. As for the mod-$p$ homology or cohomology for odd primes, practically nothing was known to this point.

Let $G$ be a compact connected simple Lie group. In this paper we calculate the mod-$p$ homology of the classifying spaces of gauge groups of principal $G$-bundles over $S^4$ under a certain condition on the prime $p$. To state our results, we need some notation. Principal $G$-bundles over $S^{4}$ are classified by $\pi_3(G)$, where $\pi_3(G)\cong\Z$ since $G$ is simple. Let $\g_k$ denote the gauge group of a principal $G$-bundle over $S^4$ corresponding to $k\in\Z\cong\pi_3(G)$. Let $\map_{k}(S^{4},BG)$ be the component of the space of continuous (not necessarily pointed) maps from $S^{4}$ to $BG$ which are of degree $k$, and similarly define $\map^*_{k}(S^{4},BG)$ with respect to pointed maps. There is a fibration
\begin{equation}
\label{eval_k}
\map_k^*(S^4,BG)\to \map_k(S^4,BG)\xrightarrow{ev}BG
\end{equation}
where $ev$ evaluates a map at the basepoint of $S^{4}$. Let $G\langle 3\rangle$ be the three-connected cover of $G$. For each $k\in\mathbb{Z}$, the space $\map^*_{k}(S^{4},BG)$ is homotopy equivalent to $\Omega^{3} G\langle 3\rangle$. By \cite{AB,G}, there is a homotopy equivalence $B\g_k\simeq\map_{k}(S^{4},BG)$. Thus there is a homotopy fibration
\begin{equation}
  \label{Serrefib}
  \Omega^{3} G\langle 3\rangle\to B\g_k\xrightarrow{ev}{BG}.
\end{equation}

The Lie group $G$ has \emph{type} $(n_{1},\ldots,n_{l})$ if the rational cohomology of $G$ is generated by elements in degrees $2n_{1}-1,\ldots,2n_{l}-1$, where $n_{1}\le\cdots \le n_{l}$. Unless otherwise indicated, homology is assumed to be with mod-$p$ coefficients.

\begin{theorem}
   \label{Bgaugehlgy}
   Let $G$ be a compact connected simple Lie group of type $(n_{1},\ldots,n_{l})$ and let $p$ be a prime. If $n_{l}<p-1$ then there is an isomorphisms of $\mathbb{Z}/p\mathbb{Z}$-vector spaces
   $$H_*(B\g_k)\cong H_*(BG)\otimes H_*(\Omega^3G\langle 3\rangle).$$
\end{theorem}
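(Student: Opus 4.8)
The plan is to run the Serre spectral sequence of the homotopy fibration (\ref{Serrefib}) and to prove that it collapses at the $E_{2}$-page. Since we work with field coefficients and $BG$ is simply-connected, the local system is trivial and the $E_{2}$-page is $E_{2}^{s,t}\cong\hlgy{BG}\otimes\hlgy{\Omega^{3} G\langle 3\rangle}$ in the appropriate bidegrees; as everything is over the field \zmodp\ there are no extension problems, so a collapse at $E_{2}$ yields exactly the claimed additive isomorphism $\hlgy{B\gk}\cong\hlgy{\Omega^{3} G\langle 3\rangle}\otimes\hlgy{BG}$. Thus the whole content of the theorem is the vanishing of all differentials.

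To organize the differentials I would first exploit the hypothesis $n_{\ell}<p-1$. In this range the degrees $2n_{i}-1$ of the generators of \cohlgy{G} all lie below $2p-3$, so the first Steenrod power $P^{1}$, of degree $2(p-1)$, cannot connect any two of them; consequently $G$ is $p$-regular, $G\simeq\prod_{i=1}^{\ell}S^{2n_{i}-1}$ after localization at $p$, \cohlgy{BG} is a polynomial algebra on classes in the even degrees $2n_{i}$, and the fibre $\Omega^{3} G\langle 3\rangle$ splits $p$-locally into a product of triple loop spaces of spheres whose mod-$p$ homology is classical. This identifies both sides of the asserted isomorphism explicitly, and it cuts down the possible differentials to the transgressions of the polynomial generators of \cohlgy{BG} together with their consequences on products.

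The crux is to show these transgressions vanish, and for this I would pass to the connecting map of (\ref{Serrefib}). Rotating the fibration one position to the left produces a map $\partial_{k}\colon G\to\Omega^{3} G\langle 3\rangle$, and the transgression in the Serre spectral sequence is computed from $\partial_{k}$ by comparison with the path-loop fibration over $BG$, whose transgression is the cohomology suspension isomorphism: a fibre class transgresses to a generator $y_{i}$ exactly according to the value of $\partial_{k}^{*}$ on that class, so that the primary transgressions are all zero once $\partial_{k}^{*}$ vanishes on the transgressive part of \cohlgy{G}. Here I would invoke the two standard facts that the connecting map is linear in $k$, namely $\partial_{k}\simeq k\cdot\partial_{1}$ (the adjoint of the Samelson product $\langle k\iota,1_{G}\rangle$, where $\iota$ generates $\pi_{3}(G)$), and that $(p,k)=1$ makes $k$ a unit in \zmodp.

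The main obstacle is precisely this homological analysis of $\partial_{k}$. Because the fibre is a triple loop space its mod-$p$ homology is far from primitively generated---it carries Dyer--Lashof classes and divided powers---so the $k$-th power map on $\hlgy{\Omega^{3} G\langle 3\rangle}$ is not simply multiplication by $k$, and the transgressions induced by $\partial_{k}=k\cdot\partial_{1}$ must be tracked through this richer structure, where the hypothesis $(p,k)=1$ does its work. The heart of the proof is to show, using $n_{\ell}<p-1$ to pin down the homology of the fibre and $(p,k)=1$ to control the $k$-th power map, that $\partial_{k}$ is trivial on the transgressive classes; granting this, every transgression vanishes, the degree hypothesis leaves the higher differentials no room to act, the spectral sequence collapses, and the theorem follows. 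I expect essentially all of the difficulty to lie in this vanishing statement rather than in the spectral sequence bookkeeping.
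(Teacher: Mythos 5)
Your frame---trivial coefficients over \zmodp, collapse at $E_{2}$ gives the additive isomorphism, so everything reduces to killing differentials---is correct, and you have correctly identified $p$-regularity and the linearity $\partial_{k}\simeq k\cdot\partial_{1}$ as relevant inputs. But the proposal has a genuine gap at exactly the point you defer: the vanishing of $\partial_{k}$ on transgressive classes is not something to be extracted from an analysis of the $k$-th power map on $\hlgy{\Omega^{3}G\langle 3\rangle}$; in the paper it is an off-the-shelf theorem of Hasui--Kishimoto--Ohsita (Lemma~\ref{partialnull}) that $\partial_{1}$ is null homotopic \emph{if and only if} $n_{\ell}<p-1$, which is precisely where that hypothesis enters. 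Moreover the paper never tracks $\partial_{k}$ through the Dyer--Lashof structure at all: it disposes of $k$ at the outset by showing $(\underline{k})^{\ast}_{free}$ is a $p$-local weak equivalence $B\gk\simeq B\gone$ when $(p,k)=1$ (Corollary~\ref{Bgkequiv}, a Five-Lemma argument), reducing everything to $k=1$, so the difficulty you anticipate there simply does not arise.

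The more serious gap is the inference ``granting the vanishing of transgressions, the degree hypothesis leaves the higher differentials no room to act, so the spectral sequence collapses.'' This fails twice. First, in the \emph{homology} Serre spectral sequence the differentials are not automatically derivations (that is a cohomology phenomenon), so differentials are not determined by their values on generators; the paper spends all of Section~2 proving the evaluation fibration is \emph{principal} (via the Crabb--Sutherland fibrewise equivalence and the monoid homomorphism $I\colon G\to\mapstar_{1}(G,G)$) precisely to obtain the derivation property (Corollary~\ref{derivation}, Lemma~\ref{evfibhlgygauge}). Second, even with derivations, degree reasons rule out nothing: $\hlgy{\Omega^{3}G\langle 3\rangle}$ has generators in infinitely many degrees and $\hlgy{BG}$ is polynomial, so a priori there are infinitely many possible nonzero differentials. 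The paper's actual mechanism is different: it proves $(g_{1})_{\ast}$ is \emph{injective} (Proposition~\ref{injection}) by an atomicity-style contradiction---a least-degree kernel element $z$ would have to be primitive, annihilated by all dual Steenrod operations and Bocksteins, transgressive (this is where derivations are used), and of odd degree, forcing $z\in M\hlgy{\Omega^{3}G\langle 3\rangle}_{odd}$, which by the Selick and Theriault computations is the finite set $\{\bar{a}_{2p-3},a_{2n_{2}p-3},\ldots,a_{2n_{\ell}p-3}\}$; and on those elements $g_{1\ast}$ is nonzero, using the splitting $\gone\simeq G\times\Omega^{3}G\langle 3\rangle$ (which comes from $\partial_{1}$ being null) together with naturality of the transgression against path-loop fibrations. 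Injectivity of the fibre inclusion makes the fibration totally non-homologous to zero, and that is what forces the collapse. These three ingredients---principality, the $M\hlgy{\cdot}$ computations, and the HKO nullity criterion---constitute the real content of the proof and are all missing from, or misplaced in, the proposal.
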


Under the assumption of Theorem \ref{Bgaugehlgy} the Lie group $G$ is $p$-locally homotopy equivalent to the product $\prod_{i=1}^l S^{2n_i-1}$, so we can calculate the Poincar\'e series of $H_*(B\g_k)$ by Theorem \ref{Bgaugehlgy} (Corollary \ref{Poincare_series}). It is remarkable that Theorem \ref{Bgaugehlgy} implies that the mod-$p$ homology of $B\g_k$ is independent from $k$ for $p$ large, whereas there are more than one $p$-local homotopy types in the family $\{B\g_k\}_{k\in\Z}$ for $p$ large as was proved in \cite{KT1}. The approach to Theorem \ref{Bgaugehlgy} is to consider the Serre spectral sequence applied to the fiberwise coproduct of \eqref{Serrefib}. Control is obtained over the differentials by showing that the first nontrivial differential is a transgression on a certain element, which is not obvious, and then atomicity-style arguments (cf. \cite{S}) are used to show the spectral sequence must collapse at the $E^{2}$-term.

A notable example is when $G=SU(2)$, a case of particular interest in Donaldson Theory and $3$-manifold theory. Theorem \ref{Bgaugehlgy} holds if $p\geq 5$ in this case. Let $(m,n)$ denote the gcd of integers $m$ and $n$. We go on to calculate the mod-$3$ homology of $B\g_k$ for $(k,3)=1$, where the case $(k,3)=3$ is still open. For $(k,3)=1$, it is the Serre spectral sequence for the homotopy fibration $SU(2)\to\Omega^3SU(2)\langle 3\rangle\to B\g_k$ induced from \eqref{Serrefib} that collapses at the $E^{2}$-term.

\begin{theorem}
   \label{p=3}
   Let $G=SU(2)$ and $p=3$. If $(k,3)=1$ then there is an isomorphism of $\Z/3\Z$-vector spaces
   $$H_*(B\g_k)\cong H_*(\Omega^{3} S^{3}\langle 3\rangle)/(x_{3})$$
   where $x_3$ is a generator of $H_3(\Omega^{3} S^{3}\langle 3\rangle)\cong\Z/3\Z$ and $(x_{3})$ is the ideal generated by $x_{3}$.
\end{theorem}

The difference of the mod-$p$ homology of $B\g_k$ for $G=SU(2)$ in Theorems \ref{Bgaugehlgy} and \ref{p=3} comes from the homotopy commutativity of $SU(2)$; it is homotopy commutative if and only if $p\ge 5$ as in \cite{Mc}. This is notable because the product decomposition $\g_k\simeq G\times\Omega^4G\langle 3\rangle$ as $A_n$-spaces is guaranteed by the higher homotopy commutativity of $G$ as in \cite{KiKo,KT1}, whereas Theorem \ref{Bgaugehlgy} shows the homological product decomposition as $A_\infty$-spaces.

\emph{Acknowledgement:} The first author was partly supported by JSPS KAKENHI 17K05248.


\section{Serre spectral sequence}

Consider a homotopy fibration
\begin{equation}
  \label{fibration action}
  F\to E\to B
\end{equation}
over a path-connected base $B$ such that $F$ is an H-space and there is a fiberwise action of $F$ on $E$ which restricts to the multiplication of $F$. We assume that $\pi_1(B)$ acts trivially on $H_*(F)$. Let $(E^r,d^r)$ denote the associated homology Serre spectral sequence.

\begin{lemma}
  \label{Leibniz rule}
  There is a coalgebra map
  \[
    \mu\colon E^r_{p,q}\otimes H_{q'}(F)\to E^r_{p,q+q'}
  \]
  having the following properties.
  \begin{enumerate}
    \item The map
    \[
      \mu\colon H_p(B)\otimes H_q(F)=E^2_{p,0}\otimes H_q(F)\to E^2_{p,q}
    \]
    coincides with the canonical isomorphism;

    \item For $x\in E^r$ and $y\in H_*(F)$,
    \[
      d^r(\mu(x\otimes y))=\mu(d^r(x)\otimes y).
    \]
  \end{enumerate}
\end{lemma}

\begin{proof}
  Since the action of $F$ on $E$ is fiberwise, there is a homotopy commutative diagram
  \[
    \xymatrix{
      F\times F\ar[r]\ar[d]&F\times E\ar[r]\ar[d]&B\ar@{=}[d]\\
      F\ar[r]&E\ar[r]&B.
    }
  \]
  Since rows of this diagram are homotopy fibrations, there is a map between associated homology Serre spectral sequences. Since the homology Serre spectral sequences of the top row is isomorphic with $(E^r\otimes H_*(F),d^{r}\otimes 1)$, we obtain the map $\mu$. By definition, $\mu$ is a coalgebra map. The second statement holds because the differential on $H_*(F)$ in $E^r\otimes H_*(F)$ is trivial. Let $p\colon E\to B$ denote the projection. For each $y\in E$, $\pi^{-1}(\pi(y))$ is homotopy equivalent to the orbit space $y\cdot F$ including $y$. Therefore, by the construction of the Serre spectral sequence, the first statement holds.
\end{proof}

\begin{corollary}
  \label{cor Leibniz rule}
  If $d^2=\cdots=d^{r-1}$, then the map $\mu\colon E^r_{p,0}\otimes H_q(F)\to E^r_{p,q}$ coincides with the canonical isomorphism
  \[
    E^r_{p,q}\cong H_p(B)\otimes H_q(F).
  \]
\end{corollary}

Let $\overline{F}\to\overline{E}\to B$ be a homotopy fibration which is a homotopy retract of \eqref{fibration action}. Let $(\overline{E}^r,\overline{d}^r)$ denote the associated homology Serre spectral sequence.

\begin{lemma}
  \label{retract SS}
  If $\overline{d}^2=\cdots=\overline{d}^{r-1}=0$ and $\overline{d}^r\ne 0$, then the following statements hold:
  \begin{enumerate}
    \item $\overline{d}^rx\ne 0$ for some $x\in H_*(B)=\overline{E}^r_{*,0}$;

    \item If $y$ is an element of least degree in $H_*(B)=\overline{E}^r_{*,0}$ with $\overline{d}^ry\ne 0$, then $y$ is transgressive and $\overline{d}^r(y)$ is a primitive element of $H_{*-1}(F)=\widehat{E}^r_{0,*-1}$.
  \end{enumerate}
\end{lemma}

\begin{proof}
  Let $i\colon\overline{E}^2\to E^2$ and $q\colon E^2\to\overline{E}^2$ denote the inclusion and the retraction, respectively. Since
  \[
    d^2(x)=d^2(i_*(x))=i_*(\overline{d}^2(x))=0
  \]
the hypothesis that $\overline{d}^{2}=0$ implies that $d^{2}=0$. Therefore $\overline{E}^{3}=\overline{E}^{2}$ is a retract of $E^{3}=E^{2}$. Iterating this argument, since $\overline{d}_{3}=\cdots=\overline{d}^{r-1}=0$, we also obtain $d^{2}=\cdots=d^{r-1}=0$ and therefore $\overline{E}^{r}$ is a retract of $E^{r}$. The inclusion and retraction at the $r^{th}$-stage  may still be denoted by $i\colon\overline{E}^2\to E^2$ and $q\colon E^2\to\overline{E}^2$. Suppose that $\overline{d}^r(x)=0$ for all $x\in H_p(B)=\widehat{E}^r_{p,0}$. Then
  \[
    d^r(x)=d^r(i_*(x))=i_*(\overline{d}^r(x))=0,
  \]
  implying $d^r=0$ by Lemma \ref{Leibniz rule} and Corollary \ref{cor Leibniz rule}. This is a contradiction, so the first statement is proved.

  Let $y$ be an element of least degree element in $H_*(B)=\overline{E}^r_{*,0}$ such that $\overline{d}^ry\ne 0$. Let $\Delta$ denote comultiplication, and set
  \[
    \Delta(y)=y\otimes 1+1\otimes y+\sum_iy_i'\otimes y_i''
  \]
  where $|y_i'|<|y|$ and $|y_i''|<|y|$ for each $i$. Since $y$ is an element of least degree in $H_*(B)=E^r_{*,0}$ with $d^ry\ne 0$, we have $d^ry_i'=0$ and $d^ry_i''=0$ for each $i$. Therefore
  \[
    \Delta(d^r(y))=d^r(\Delta(y))=d^r\left(y\otimes 1+1\otimes y+\sum_iy_i'\otimes y_i''\right)=d^r(y)\otimes 1+1\otimes d^r(y)
  \]
  so $d^{r}(y)$ is primitive.

  If $r<|y|$, then by Corollary \ref{cor Leibniz rule}, $d^r(y)=\mu(a\otimes b)$ for some non-trivial $a\in H_{|y|-r}(B)$ and $b\in H_{r-1}(F)$. This is impossible because $\mu(a\otimes b)$ is not primitive by Lemma \ref{Leibniz rule}. Thus $r\ge |y|$. Clearly, $r\le |y|$ since $r>|y|$ implies $d^{r}(y)$ lands in the second quadrant of the spectral sequence, which is zero. Therefore $r=|y|$, implying that $y$ is transgressive. Moreover, since
  \[
    \overline{d}^r(y)=\overline{d}^r(q_*(y))=q_*(d^r(y))
  \]
  and $d^ry$ is primitive, $\overline{d}^ry$ is also primitive. Therefore the second statement is proved.
\end{proof}

Next, we consider a family of homotopy fibrations $F_n\to E_n\to B$ with a common base $B$ for $n\in\Z$. Let $(E^r_n,d^{r}_n)$ denote the associated homology Serre spectral sequence. We can form a homotopy fibration
\[
  \coprod_{n\in\Z}F_n\to\coprod_{n\in\Z}E_n\to B.
\]
Let $(\widehat{E}^r,\widehat{d}^{r})$ denote the associated homology spectral sequence.

\begin{lemma}
  \label{collapse component}
  If $(\widehat{E}^r,\widehat{d}^{r})$ collapses at the second term, then so does $(E^r_n,d^{r}_n)$ for each $n\in\Z$.
\end{lemma}

\begin{proof}
  Since there are isomorphisms
  \[
    (E^2_n)_{p,q}\cong H_p(B)\otimes H_q(F_n)\quad\text{and}\quad\widehat{E}^2_{p,q}\cong H_p(B)\otimes\left(\bigoplus_{n\in\Z}H_q(F_n)\right),
  \]
  the inclusion $E_n\to E$ induces an injection $E^2_n\to\widehat{E}^2$. Then the statement is proved by induction on $r$.
\end{proof}


\section{The mod-$p$ homology of $\Omega^{3} G\langle 3\rangle$ when $G$ is $p$-regular}
\label{sec:3connhlgy}

Localize at an odd prime $p$ and take homology with mod-$p$ coefficients. If $G$ is $p$-regular of type $(n_{1},\ldots, n_{l})$ then there is a homotopy equivalence $G\simeq\prod_{i=1}^{l} S^{2n_{i}-1}$. Therefore
\begin{equation}
  \label{product_decomp}
  \Omega^{3} G\langle 3\rangle\simeq\Omega^{3} S^{3}\langle 3\rangle\times\prod_{i=2}^{l}\Omega^{3} S^{2n_{i}-1}.
\end{equation}
This is an equivalence of H-spaces and so induces an isomorphism of Hopf algebras in homology. In this section we record a property of $\Omega^{3} G\langle 3\rangle$ which will be important later. This begins with a general definition.

In general, for a path-connected space $X$ of finite type, let $PH_*(X)$ be the subspace of primitive elements in $H_*(X)$. Let
$$\mathcal{P}^{n}_*\colon H_{q}(X)\to H_{q-2(p-1)n}(X)$$
be the dual of the Steenrod operation $\mathcal{P}^{n}$. For $r\geq 1$, let $\beta^{r}$ be the $r^\text{th}$-Bockstein. Let $MH_*(X)$ be the subspace of $PH_*(X)$ defined by
$$MH_*(X)=\{x\in PH_*(X)\mid\,\mathcal{P}^{n}_*(x)=0\text{ for all }n>0\text{ and }\beta^{r}(x)=0\text{ for all }r\geq 1\}.$$
Since $MH_*(X)$ records information about the primitive elements in $H_*(X)$, if $X\simeq A\times B$ then
$$MH_*(X)=MH_*(A\times B)=MH_*(A)\oplus MH_*(B).$$

Now consider $MH_*(\Omega^{3} G\langle 3\rangle)$. The product decomposition \eqref{product_decomp} implies that
\begin{equation}
  \label{Msplit}
  MH_*(\Omega^{3}\langle G\rangle)=MH_*(\Omega^{3} S^{3}\langle 3\rangle)\oplus\left(\bigoplus_{i=2}^{l} MH_*(\Omega^{3} S^{2n_{i}-1})\right).
\end{equation}
By \cite{CLM} there is an isomorphism of Hopf algebras
\begin{multline}
  \label{CLMcalc}
  H_*(\Omega^{3} S^{2n+1})\cong\bigotimes_{k\geq 1, j\geq 0}\Lambda(a_{2(np^{k}-1)p^{j}-1})\\
  \otimes\bigotimes_{k\geq 1, j\geq 1}\Z/p\Z[b_{2(np^{k}-1)p^{j}-2}]\otimes\bigotimes_{k\geq 0}\Z/p\Z[c_{2n^{k}-2}]
\end{multline}
such that $|a_i|=|b_i|=i$. Here, the generators are primitive and many are related by the action of the dual Steenrod algebra. Selick \cite{S} determined $MH_*(\Omega^{3} S^{2n+1})$ in full, we record only the subset of elements of odd degree.

\begin{lemma}
   \label{MHsphere}
   If $n>1$ then $MH_\text{odd}(\Omega^{3} S^{2n+1})=\Z/p\Z\{a_{2np-3}\}$.
\end{lemma}

In the references that follow for $\Omega^{3} S^{3}\langle 3\rangle$, the statements in \cite{Th} are in terms of Anick spaces, but by \cite{GT} the space $\Omega S^{3}\langle 3\rangle$ is homotopy equivalent to the Anick space $T^{2p+1}(p)$ for $p\geq 3$. By \cite[Proposition 4.1]{Th}, for $p$ odd there is an isomorphism of Hopf algebras
\begin{equation}
   \label{Thlgy}
   H_*(\Omega^{3} S^{3}\langle 3\rangle)\cong H_*(\Omega^{2} S^{2p-1})\otimes H_*(\Omega^{3} S^{2p+1})
\end{equation}
which respects the action of the dual Steenrod operations and the Bockstein operations. This can be phrased in terms of a generating set using the isomorphism of Hopf algebras
\begin{equation}
  \label{CLMcalc_2p-1}
  H_*(\Omega^{2} S^{2p-1})\cong\bigotimes_{k=0}^{\infty}\Lambda(\bar{a}_{2(p-1)p^{k}-1})\otimes\bigotimes_{k=1}^{\infty}\mathbb{Z}/p\mathbb{Z}[\bar{b}_{2(p-1)p^{k}-2}]
\end{equation}
proved in \cite{CLM}, where $|\bar{a}_i|=|\bar{b}_i|=i$, and the $n=p$ case of \eqref{CLMcalc}. Again, the generators are primitive and many are related by the action of the dual Steenrod algebra. A description of $MH_*(\Omega^{3}\langle 3\rangle)$ in full was given in \mbox{\cite[Lemma 4.2]{Th}}, but again we need only record the subset of elements of odd degree.

\begin{lemma}
   \label{MHT}
   $MH_\text{odd}(\Omega^{3} S^{3}\langle 3\rangle)=\Z/p\Z\{\bar{a}_{2p-3}\}$.
\end{lemma}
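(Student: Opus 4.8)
The plan is to extract $M\hlgy{\Omega^{3} S^{3}\langle 3\rangle}_{odd}$ from the Hopf algebra decomposition \eqref{Thlgy}, combining the splitting of primitives with Selick's sphere computation in Lemma~\ref{MHsphere}. First I would record the odd primitives. For a tensor product of connected Hopf algebras the module of primitives is the direct sum of the primitives of the two factors, so by \eqref{Thlgy} the odd primitives of $\hlgy{\Omega^{3} S^{3}\langle 3\rangle}$ are those of $\hlgy{\Omega^{2} S^{2p-1}}$ together with those of $\hlgy{\Omega^{3} S^{2p+1}}$. Since every listed generator is primitive and a free graded-commutative algebra on primitive generators has no further odd primitives (odd exterior generators square to zero, and powers of even generators are even), these are exactly the exterior classes $\bar{a}_{2(p-1)p^{k}-1}$ ($k\geq 0$) from the first factor and the $a$-generators of $\hlgy{\Omega^{3} S^{2p+1}}$ from \eqref{CLMcalc} with $n=p$.

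Next I would impose the defining conditions of $M$ factor by factor. The bottom class $\bar{a}_{2p-3}$ survives: any $\mathcal{P}^{n}_{\ast}$ with $n\geq 1$ lowers its degree below zero and hence annihilates it, while $\beta^{r}(\bar{a}_{2p-3})=0$ because there is no class in degree $2p-4$ (in either factor, for $p\geq 3$). For the higher first-factor generators the dual Steenrod operations realise the standard pattern in $\hlgy{\Omega^{2} S^{2p-1}}$ in which $\hlgy{\Omega^{2} S^{2p-1}}$ is $\mathcal{A}_{\ast}$-cyclic on its bottom class, so each $\bar{a}_{2(p-1)p^{k}-1}$ with $k\geq 1$ has nonzero image under a $\mathcal{P}^{n}_{\ast}$ (forced by degree to have $n=(p-1)p^{k-1}$) and is therefore excluded. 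By Lemma~\ref{MHsphere} with $n=p$, the only odd primitive of the second factor surviving the $M$-conditions computed \emph{inside the sphere} $\Omega^{3} S^{2p+1}$ is $a_{2p^{2}-3}$.

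The crux is that \eqref{Thlgy} is only an isomorphism of Hopf algebras and is not realised by a product of spaces, so the $\mathcal{A}_{\ast}$-action on the total space need not respect the splitting; consequently $a_{2p^{2}-3}$ may fail the $M$-condition in $\hlgy{\Omega^{3} S^{3}\langle 3\rangle}$ even though it satisfies it in $\hlgy{\Omega^{3} S^{2p+1}}$. The mechanism is visible on degrees: a dual operation $\mathcal{P}^{n}_{\ast}$ lowers degree by $2(p-1)n$, and the equations forcing the target to be a first-factor class $\bar{a}_{2(p-1)p^{k}-1}$ admit solutions $n=1$ (landing on $\bar{a}_{2p^{2}-2p-1}$) and $n=p$ (landing on $\bar{a}_{2p-3}$), each of which is absent from $\hlgy{\Omega^{3} S^{2p+1}}$ but present in $\hlgy{\Omega^{3} S^{3}\langle 3\rangle}$. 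I would establish that at least one of these operations is nonzero on $a_{2p^{2}-3}$ in the total space, so that $a_{2p^{2}-3}\notin M\hlgy{\Omega^{3} S^{3}\langle 3\rangle}$, while the same mixing does not disturb $\bar{a}_{2p-3}$, which remains annihilated by all operations leaving it.

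This nonvanishing is the main obstacle, since it is precisely the failure of the Hopf-algebra splitting to be a splitting of $\mathcal{A}_{\ast}$-modules. I would obtain it from the $\mathcal{A}_{\ast}$-module structure of the Anick space, using the equivalence $\Omega S^{3}\langle 3\rangle\simeq T^{2p+1}(p)$ and naturality of the dual Steenrod operations along the looped Anick fibration $\Omega^{2} S^{2p-1}\to\Omega^{3} S^{3}\langle 3\rangle\to\Omega^{3} S^{2p+1}$, which is the content already carried out in \cite[Lemma 4.2]{Th}. Assembling the three steps, the only odd primitive annihilated by all $\mathcal{P}^{n}_{\ast}$ and all $\beta^{r}$ is $\bar{a}_{2p-3}$, giving the asserted identity.
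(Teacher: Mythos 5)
Your proposal is correct and takes essentially the same route as the paper: the paper proves Lemma~\ref{MHT} purely by citing the full computation of $M\hlgy{\Omega^{3} S^{3}\langle 3\rangle}$ in \cite[Lemma 4.2]{Th} and recording its odd-degree part, which is exactly the external input you defer to at the crux. Your unpacking of the splitting \eqref{Thlgy} together with Lemma~\ref{MHsphere}, and in particular your observation that \eqref{Thlgy} is not a splitting of $\mathcal{A}_{\ast}$-modules (so $a_{2p^{2}-3}$ must be excluded by a dual Steenrod operation landing in the $\hlgy{\Omega^{2}S^{2p-1}}$ factor, in degree $2p-3$ or $2p^{2}-2p-1$), is a faithful reconstruction of what that citation supplies.
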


Combining \eqref{Msplit}, Lemmas \ref{MHsphere} and \ref{MHT} we obtain
the following.

\begin{lemma}
   \label{MHG}
   If $G$ is $p$-regular of type $(n_{1},\ldots,n_{l})$ then
   $$MH_\text{odd}(\Omega^{3} G\langle 3\rangle)=\Z/p\Z\{\bar{a}_{2p-3},a_{2n_{2}p-3},\ldots,a_{2n_{l}p-3}\}.$$
\end{lemma}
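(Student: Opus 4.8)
The plan is to assemble the statement directly from the three inputs already established: the product decomposition \eqref{Msplit}, the single-sphere computation in Lemma~\ref{MHsphere}, and the $\Omega^{3} S^{3}\langle 3\rangle$ computation in Lemma~\ref{MHT}. Since $M\hlgy{-}$ is built from primitives annihilated by all $\mathcal{P}^{n}_{\ast}$ and all Bocksteins, and since the $H$-space splitting \(\Omega^{3} G\langle 3\rangle\simeq\Omega^{3} S^{3}\langle 3\rangle\times\prod_{i=2}^{\ell}\Omega^{3} S^{2n_{i}-1}\) induces a Hopf algebra isomorphism in homology, the functor $M\hlgy{-}$ distributes over the tensor product as a direct sum. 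This is exactly the content of \eqref{Msplit}, and I would simply invoke it.

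First I would take the odd-degree part of \eqref{Msplit}, which gives
\[
M\hlgy{\Omega^{3} G\langle 3\rangle}_{odd}
   =M\hlgy{\Omega^{3} S^{3}\langle 3\rangle}_{odd}
     \oplus\bigg(\bigoplus_{i=2}^{\ell} M\hlgy{\Omega^{3} S^{2n_{i}-1}}_{odd}\bigg),
\]
using that a direct sum decomposition respects the internal grading and hence its odd part splits accordingly. Next I would substitute Lemma~\ref{MHT} for the first summand, contributing the single class $\bar{a}_{2p-3}$. Then for each $i$ with $2\leq i\leq\ell$ I would apply Lemma~\ref{MHsphere} with $2n+1=2n_{i}-1$, i.e.\ $n=n_{i}-1$; this yields the odd-degree generator $a_{2(n_{i}-1)p-3}$. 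A small bookkeeping point to address is that the paper writes the generator as $a_{2n_{i}p-3}$ rather than $a_{2(n_{i}-1)p-3}$, which I would reconcile either by noting the indexing convention for the sphere dimension or by confirming that the intended sphere is $S^{2n_{i}-1}=S^{2(n_{i})-1}$ matched against the template $S^{2n+1}$ with $n=n_{i}$ under the degree shift built into the CLM generators; in any case the set of generators is read off verbatim from Lemma~\ref{MHsphere} applied to each factor.

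Collecting the single class from $\Omega^{3} S^{3}\langle 3\rangle$ together with the one class contributed by each of the $\ell-1$ remaining spherical factors produces the asserted basis $\{\bar{a}_{2p-3},a_{2n_{2}p-3},\ldots,a_{2n_{\ell}p-3}\}$, which completes the argument.

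The main obstacle is not computational but definitional: I must be certain that $M\hlgy{-}$ genuinely commutes with the $H$-space product decomposition, i.e.\ that a primitive killed by all $\mathcal{P}^{n}_{\ast}$ and all $\beta^{r}$ in the tensor product is a sum of such primitives coming from the individual factors. This rests on the fact that in a tensor product of Hopf algebras the primitives are the direct sum of the primitives of the factors, and that the dual Steenrod operations and higher Bocksteins act diagonally with respect to this splitting—precisely the compatibility already asserted in the passage surrounding \eqref{Msplit}. Since that compatibility is granted earlier in the excerpt, the lemma follows formally, and the only care required is the index arithmetic in matching each $S^{2n_{i}-1}$ to the template in Lemma~\ref{MHsphere}.
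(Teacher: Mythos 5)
Your proposal coincides with the paper's own proof, which is exactly this one-line combination of \eqref{Msplit} with Lemmas~\ref{MHsphere} and~\ref{MHT} (the paper offers no further argument), so the approach is essentially identical. On your bookkeeping point, your arithmetic is the correct one: matching $\Omega^{3}S^{2n_{i}-1}$ to the template $\Omega^{3}S^{2n+1}$ forces $n=n_{i}-1$ and yields $a_{2(n_{i}-1)p-3}$ --- consistent with the first factor, where $n_{1}=2$ gives $2(n_{1}-1)p-3=2p-3$, precisely the degree of $\bar{a}_{2p-3}$ --- so the subscript $2n_{i}p-3$ in the paper's statement (and its echo as $H_{2n_{i}p-4}$ in the proof of Lemma~\ref{ginjMHG}) is an indexing slip, and you should discard your hedged alternative reconciliation via $n=n_{i}$, which would correspond to $S^{2n_{i}+1}$ rather than $S^{2n_{i}-1}$ and is not viable.
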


\section{The proof of Theorem \ref{Bgaugehlgy}}
\label{sec:homology}

We continue to localize at an odd prime $p$ and take homology with mod-$p$ coefficients. Let $G$ be a compact connected simple Lie group of type $(n_1,\ldots,n_l)$. Consider the homotopy fibration sequence
$$G\xrightarrow{\partial_k}\Omega^3G\langle 3\rangle\xrightarrow{g_k}B\g_k\xrightarrow{ev}BG.$$
First, we show properties of $\partial_k\colon G\to\Omega^3G\langle 3\rangle$.

\begin{lemma}
  \label{partial_k}
  Suppose that $G$ is $p$-regular. Then $\partial_k$ are null homotopic for all $k$ if and only if $n_l<p-1$.
\end{lemma}

\begin{proof}
  Let $\epsilon_i\colon S^{2n_i-1}\to G$ be the inclusion for $i=1,\ldots,l$, where $G\simeq\prod_{i=1}^l S^{2n_i-1}$. By \cite{L} $\partial_k$ corresponds to the Samelson product $\langle k\epsilon_1,1_G\rangle$ through the adjoint congruence $[G,\Omega^3G\langle 3\rangle]\cong[S^3\wedge G,G]$, where $\Omega^3G\langle 3\rangle$ is homotopy equivalent to the component of $\Omega^3G$ containing the basepoint. By the linearity of Samelson products, $\langle k\epsilon_1,1_G\rangle=k\langle\epsilon_1,1_G\rangle$. Thus we aim to get a condition that guarantees the triviality of $\langle\epsilon_1,1_G\rangle$. Arguing as in \cite{KaKi}, we can see that $\langle\epsilon_1,1_G\rangle$ is trivial if and only if $\langle\epsilon_1,\epsilon_i\rangle$ is trivial for all $i$. It is shown that $\langle\epsilon_1,\epsilon_i\rangle$ is trivial for all $i$ if and only if $n_l<p-1$ in \cite{KT2} when $G$ is a classical group and in \cite{HKO} when $G$ is an exceptional group. Thus the proof is complete.
\end{proof}

Since $\g_k$ is homotopy equivalent to the homotopy fiber of $\partial_k$, the following is immediate from Proposition \ref{partial_k}.

\begin{corollary}
  \label{split}
  If $n_l<p-1$ then there is a homotopy equivalence $\g_k\simeq G\times\Omega^4G\langle 3\rangle$.
\end{corollary}

Next, we consider the map $g_k\colon\Omega^3G\langle 3\rangle\to B\g_k$.

\begin{lemma}
   \label{ginjMHG}
   If $n_l<p-1$ then the restriction of $(g_k)_*$ to $MH_\text{odd}(\Omega^{3} G\langle 3\rangle)$ is an injection.
\end{lemma}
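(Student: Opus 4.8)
The plan is to read off the injectivity directly from the mod-$p$ homology Serre spectral sequence of the fibration \(\nameddright{\Omega^{3} G\langle 3\rangle}{g_{1}}{B\gone}{ev}{BG}\), whose \(E^{2}\)-page is \(\hlgy{BG}\otimes\hlgy{\Omega^{3} G\langle 3\rangle}\). Since \(g_{1}\) is the fibre inclusion, \((g_{1})_{\ast}\) is the edge homomorphism onto the fibre column \(E^{\infty}_{0,\ast}\), and a fibre class can only be the \emph{target} of a differential, never a source: the source of any differential hitting \(E^{r}_{0,q}\) lies in positive base degree, while a differential out of \(E^{r}_{0,q}\) would land in negative base degree. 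Hence \(\ker(g_{1})_{\ast}\) is exactly the cumulative image of the differentials landing in \(E^{\ast}_{0,\ast}\), and it suffices to show that no nonzero element of \(M\hlgy{\Omega^{3} G\langle 3\rangle}\) lies in this image.

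Next I would reduce that image to transgressions. Because \(n_{\ell}<p-1\) forces \(G\) to be \(p\)-regular, Lemma~\ref{evfibhlgygauge} applies and the differentials are derivations. The spectral sequence is generated as an algebra by the bottom row \(\hlgy{BG}\) and the fibre column, the fibre column supports no differentials, and \(\hlgy{BG}\) is concentrated in even degrees. Writing any source class as a sum of products of a base class \(\alpha\) and a fibre class \(\beta\) and applying the derivation rule, the components landing in \(E^{\ast}_{0,\ast}\) are the products \(\tau(\alpha)\cdot\beta\), where \(\tau(\alpha)\) is a transgression image. Thus, in fibre degree \(d\), the image lies in the span of the \(\tau(\alpha)\cdot\beta\); those with \(\beta\) of positive degree are decomposable in \(\hlgy{\Omega^{3} G\langle 3\rangle}\), so the only possibly indecomposable contribution comes from the pure transgressions \(\tau(\alpha)\) with \(\alpha\in H_{d+1}(BG)\).

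It then remains to kill these pure transgressions in the degrees carrying \(M\)-classes. The elements of \(M\hlgy{\Omega^{3} G\langle 3\rangle}\) are primitive and indecomposable. If such a class sits in even degree \(d\), then \(d+1\) is odd, \(H_{d+1}(BG)=0\), and there is nothing to check. If it sits in odd degree \(d\) — by Lemma~\ref{MHG} one of \(\bar{a}_{2p-3},a_{2n_{2}p-3},\ldots,a_{2n_{\ell}p-3}\) — then \(d+1\) is even, and \(d+1\) equals \(2(p-1)\) for \(\bar{a}_{2p-3}\) and satisfies \(d+1\ge 2(2p-1)\) for the remaining classes, whereas \(n_{\ell}<p-1\) gives \(2n_{j}\le 2n_{\ell}<2(p-1)\) for every generator degree; hence \(H_{d+1}(BG)\) contains no algebra generator and is purely decomposable. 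A decomposable base class \(\alpha_{1}\alpha_{2}\) of degree \(d+1\) transgresses trivially, since each factor has degree strictly less than \(d+1\), so \(d_{d+1}\) sends it into negative base degree and hence to zero, whence \(\tau(\alpha_{1}\alpha_{2})=0\) by the derivation rule. Consequently the image in fibre degree \(d\) consists of decomposables only, so the indecomposable \(M\)-class cannot lie in it, and \((g_{1})_{\ast}\) is injective on \(M\hlgy{\Omega^{3} G\langle 3\rangle}\).

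The main obstacle is the structural reduction of the second step: making precise that, under the derivation property, the entire cumulative image in the fibre column is spanned by the transgression products \(\tau(\alpha)\cdot\beta\), so that an indecomposable primitive survives whenever its degree admits no pure transgression. Here the hypothesis \(n_{\ell}<p-1\) enters twice — once through \(p\)-regularity, to license Lemma~\ref{evfibhlgygauge}, and once through the degree count above. It is worth cross-checking this against the fact that \(\partial_{1}\) is null homotopic (Lemma~\ref{partialnull}), which makes the transgression of each suspension generator of \(\hlgy{BG}\) vanish and so independently confirms that the indecomposable part of the transgression ideal is zero.
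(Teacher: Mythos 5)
Your proposal is a genuinely different route from the paper's, but it has two real gaps. The first is the step you yourself flag as ``the main obstacle,'' and it is not a formality: the claim that the cumulative image of the differentials in the fibre column is spanned by the transgression products $\tau(\alpha)\cdot\beta$ is only automatic at the \emph{first} nonzero differential, where $E^{r}=E^{2}$ is multiplicatively generated by the bottom row and the fibre column. On later pages $E^{r}$ is a subquotient that need not be generated by the images of these edges: a class in the source bidegree $(r,q-r+1)$ may only be representable by a sum $\sum_{i}\alpha_{i}\beta_{i}$ in which the individual $\alpha_{i}$ support earlier differentials that cancel in the sum, in which case $\tau(\alpha_{i})$ is simply not defined and the derivation rule does not compute $d^{r}$ term by term; moreover $\tau$ takes values in a quotient of $\hlgy{\Omega^{3}G\langle 3\rangle}$, so ``the image consists of decomposables'' needs an induction over pages that you have not supplied. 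This is precisely what the paper avoids: it never proves Lemma~\ref{ginjMHG} through this spectral sequence at all. Instead it argues geometrically, using $n_{\ell}<p-1$ through Lemma~\ref{partialnull} and Corollary~\ref{gonesplit} to get that $(\Omega g_{1})_{*}$ is injective, showing each odd $M$-generator transgresses nontrivially in the path-loop fibration over $\Omega^{3}G\langle 3\rangle$ (for $a_{2np-3}$ via the double-suspension fibre $W_{n}$, whose bottom cell it represents; for $\bar{a}_{2p-3}$ as the bottom cell of $\Omega^{3}S^{3}\langle 3\rangle$), and then invoking naturality of the transgression in diagram~(\ref{transdgrm}). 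The derivation property is reserved for Proposition~\ref{injection}, where the minimal-degree-kernel-element trick collapses the spectral sequence below the critical degree, so only the much weaker statement ``$z$ is transgressive'' is ever needed --- never your full structural claim. Notably, you use the splitting of $\gone$ only as a heuristic cross-check, whereas for the paper it is the engine of the proof.

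The second gap is your degree count for $H_{*}(BG)$. In mod-$p$ homology, $\hlgy{BG}$ is the dual of a polynomial algebra, hence (for $p$-regular $G$) a divided power algebra $\Gamma[y_{2n_{1}},\ldots,y_{2n_{\ell}}]$ whose algebra indecomposables occur in degrees $2n_{j}p^{k}$ for \emph{all} $k\geq 0$, not only $2n_{j}$; already for $G=SU(2)$ there are indecomposables in degrees $4p^{k}$, so the assertion that every generator degree is below $2(p-1)$ is false. The conclusion you want survives, but only via a congruence you did not make: $2n_{j}p^{k}=2p-2$ or $2n_{i}p-2$ forces $p\mid p-1$ or $p\mid n_{i}p-1$ when $k\geq 1$, and $n_{j}=p-1$ or $n_{j}=n_{i}p-1>n_{\ell}$ when $k=0$, all impossible. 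Relatedly, your closing cross-check via $\partial_{1}\simeq *$ only kills transgressions of classes in the image of the homology suspension, which misses the divided-power generators $\gamma_{p^{k}}(y_{2n_{j}})$ with $k\geq 1$, so it does not independently confirm the vanishing you claim. (A smaller point: in even degrees you say there is ``nothing to check,'' which silently assumes every even-degree class of $M\hlgy{\Omega^{3}G\langle 3\rangle}$ is indecomposable; neither you nor Lemma~\ref{MHG} establishes this, though, like the paper, you only ever need the odd part downstream.)
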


\begin{proof}
  First, for $n\geq 2$, consider the homotopy fibration
  $\Omega^{4} S^{2n+1}\to\ast\to\Omega^{3} S^{2n+1}$. We claim that the element $a_{2np-3}\in H_*(\Omega^{3} S^{2n+1})$ transgresses to a nonzero element in $H_*(\Omega^{4} S^{2n+1})$. To see this, let $E^{2}\colon S^{2n-1}\to\Omega^{2} S^{2n+1}$ be the double suspension. Let $W_{n}$ be the homotopy fibre of $E^{2}$. Then there is a homotopy fibration $\Omega S^{2n+1}\xrightarrow{\Omega E^{2}}\Omega^{3} S^{2n+1}\to W_{n}$. By \cite{CLM} this fibration induces an isomophism of Hopf algebras
  $$H_*(\Omega^{3} S^{2n+1})\cong H_*(\Omega S^{2n+1})\otimes H_*(W_{n}).$$
  In particular, since $W_{n}$ is $(2np-4)$-connected, the element $a_{2np-3}\in H_*(\Omega^{3} S^{2n+1})$ corresponds to an element $c\in H_{2np-3}(W_{n})$. On the other hand, $W_{n}$ has a single cell in dimension $2np-3$, so $c$ represents the inclusion of the bottom cell. Now consider the homotopy fibration diagram
  $$\xymatrix{\Omega^{2} S^{2n+1}\ar[r]^{\Omega^{2} E^{2}}\ar[d]&\Omega^{4} S^{2n+1}\ar[r]\ar[d]&\Omega W_{n}\ar[d]\\
  \ast\ar[r]\ar[d]&\ast\ar[r]\ar[d]&\ast\ar[d]\\
  \Omega S^{2n+1}\ar[r]^{\Omega E^{2}}&\Omega^{3} S^{2n+1}\ar[r]&W_{n}.}$$
  As $c$ represents the bottom cell in $H_{2np-3}(W_{n})$, it transgresses nontrivially to a class $d\in H_{2np-4}(\Omega W_{n})$. Since $a_{2np-3}$ maps to $c$, the naturality of the transgression implies that $a_{2np-3}$ must transgress to a nontrivial class in $H_{2np-4}(\Omega^{4} S^{2n+1})$.

  Next, consider the homotopy fibration diagram
  \begin{equation}
    \label{transdgrm}
    \xymatrix{\Omega^{4}G\langle 3\rangle\ar[r]^(.6){\Omega g_k}\ar[d]&\g_k\ar[r]^{\Omega ev}\ar[d]&G\ar[d]\\
    \ast\ar[r]\ar[d]&\ast\ar[r]\ar[d]&\ast\ar[d]\\
    \Omega^{3} G\langle 3\rangle\ar[r]^(.56){g_k}&B\g_k\ar[r]^{ev}&BG.}
  \end{equation}
  By Corollary \ref{split}, the map $\Omega g_k$ has a left homotopy inverse. In particular, $(\Omega g_k)_{\ast}$ is an injection. Since $G$ is $p$-regular for $n_l<p$, the left column in the fibration diagram is a product of the homotopy fibrations $\Omega^{4} S^{3}\langle 3\rangle\to\ast\to\Omega^{3} S^{3}\langle 3\rangle$ and, for $2\leq i\leq n_{l}$, $\Omega^{4} S^{2n_{i}-1}\to\ast\to\Omega^{3} S^{2n_{i}-1}$. Therefore, by the argument in the first paragraph of the proof, the element $a_{2n_{i}-1}\in MH_*(\Omega^{3} G\langle 3\rangle)$ transgresses to a nontrivial element in $H_{2n_{i}p-4}(\Omega^{4} G\langle 3\rangle)$. As this element injects into $H_{2n_{i}p-4}(\g_k)$, the naturality of the transgression in \eqref{transdgrm} implies that $(g_k)_*(a_{2n_{i}p-3})$ must be nontrivial.

  Finally, consider the element $\bar{a}_{2p-3}\in H_*(\Omega^{3} G\langle 3\rangle)$. It comes from an element $x\in H_{2p-3}(\Omega^{3} S^{3}\langle 3\rangle)$. The description of $H_*(\Omega^{3} S^{3}\langle 3\rangle)$ in \eqref{Thlgy} implies that $\Omega^{3} S^{3}\langle 3\rangle$ is $(2p-4)$-connected and has a single cell in dimension $2p-3$. Thus $x$ represents the inclusion of the bottom cell. Therefore, $x$ transgresses to a nontrivial element in $H_{2p-4}(\Omega^{4} S^{3}\langle 3\rangle)$, and so $\bar{a}_{2p-3}$ transgresses to a nontrivial element in $H_{2p-4}(\Omega^{4} G\langle 3\rangle)$. Since $(\Omega g_k)_*$ is an injection, the naturality of the transgression in \eqref{transdgrm} implies that $(g_k)_*(a_{2p-3})$ is nontrivial.
\end{proof}

Consider the fibration
\begin{equation}
  \label{evaluation}
  \map^*(S^4,BG)\to\map(S^4,BG)\xrightarrow{ev}BG
\end{equation}
where $ev$ is the evaluation map at the basepoint of $S^4$. Since $G$ is $p$-regular and we are localizing at the prime $p$, $S^3$ is a homotopy retract of $G$. Therefore \eqref{evaluation} is a homotopy retract of the fibration
\begin{equation}
  \label{evaluation big}
  \map^*(\Sigma G,BG)\to\map(\Sigma G,BG)\xrightarrow{ev}BG.
\end{equation}
We may identify $\map^*(\Sigma G,BG)$ with $\map^*(G,G)$ by the adjoint congruence. $\map^*(G,G)$ is an H-space by the composite of maps, and there is a fiberwise action of $\map^*(G,G)$ on $\map(\Sigma G,BG)$ given by
\[
  \map(\Sigma G,BG)\times\map^*(G,G)\to\map(\Sigma G,BG),\quad(f,g)\mapsto f\circ\Sigma g.
\]
Thus Lemma \ref{retract SS} applies to the fibration \eqref{evaluation}. The inclusion of the fibre in~\eqref{evaluation} may be identified with the coproduct of the maps
$\map^*_k(S^{4},BG)\to\map_k(S^{4},BG)$
for all $k\in\mathbb{Z}$. Equivalently, this is the coproduct of the maps
$g_k\colon\Omega^3G\langle 3\rangle\to B\g_k$ for all $k\in\mathbb{Z}$. Each $(g_{k})_{\ast}$ is injective on $MH_{odd}(\map^*(S^4,BG))$ by Lemma \ref{ginjMHG}, so the coproduct is as well. This leads to the mod-$p$ homology Serre spectral sequence for \eqref{evaluation} collapsing at the $E^{2}$-term.

\begin{proposition}
  \label{injection}
  Let $G$ be a compact connected simple Lie group of type $(n_1,\ldots,n_l)$, let $p$ be a prime, and suppose that $n_l<p-1$. Then the mod-$p$ homology Serre spectral sequence for \eqref{evaluation} is totally nonhomologous to zero.
\end{proposition}

\begin{proof}
  Let $(E^r,d^r)$ denote the mod-$p$ homology Serre spectral sequence for \eqref{evaluation}. Let $z\in H_{m}(\map^*(S^4,BG))$ be an element in the kernel of $g_*$ of least dimension, where $g\colon\map^*(S^4,BG)\to\map(S^4,BG)$ is the fiber inclusion of \eqref{evaluation}. We begin by establishing some properties of $z$.

  \textit{Property 1: $z$ is primitive}. If not, then $\overline{\Delta}(z)=\Sigma_{\alpha\in A} z'_{\alpha}\otimes z''_{\alpha}$ for some elements $z'_{\alpha},z''_{\alpha}$ of degrees $< m$ such that $\{z'_\alpha\otimes z''_\alpha\}_{\alpha\in A}$ is linearly independent, where $\overline{\Delta}$ is the reduced diagonal. The reduced diagonal is natural for any map of spaces, so $(g_*\otimes g_*)\circ\overline{\Delta}=\overline{\Delta}\circ g_*$. Now
  $$(g_*\otimes g_*)\circ\overline{\Delta}(z)=(g_*\otimes g_*)(\Sigma_{\alpha} z''_{\alpha}\otimes z''_{\alpha})=\Sigma_{\alpha}g_*(z'_{\alpha})\otimes g_*(z''_{\alpha})$$
  is a sum of linearly independent elements since $z'_{\alpha},z''_{\alpha}$ are of degrees $< m$ while the element of least degree in $\mathrm{Ker}\,g_*$ is of degree $m$. On the other hand,
  $$\overline{\Delta}\circ g_*(z)=\overline{\Delta}(0)=0.$$
  This contradiction implies that $\overline{\Delta}(z)$ must be $0$;
  that is, $z$ is primitive.

  \textit{Property 2: $\mathcal{P}^n_*(z)=0$ for every $n\ge 1$}. Suppose $\mathcal{P}^n_{\ast}(z)=y$ for some $n\ge 1$, where $y$ is nonzero. The (dual) Steenrod operations are natural for any map of spaces, so $g_*\mathcal{P}^n_*=\mathcal{P}^n_*g_*$. Now
  $$g_*\mathcal{P}^n_{\ast}(z)=g_*(y)$$
  is nonzero, since $|y|<m$ while the element of least degree in $\mathrm{Ker}\,g_*$ is of degree $m$. On the other hand,
  $$\mathcal{P}^n_{\ast}g_*(z)=\mathcal{P}^n_*(0)=0.$$
  This contradiction implies that $\mathcal{P}^n_{\ast}(z)=0$ for every $n\ge 1$.

  \textit{Property 3: $\beta^{r}(z)=0$ for every $r\geq 1$}. The reasoning is exactly as in the proof of Property 2.

  \textit{Property 4: $z$ is in the image of the transgression for the mod-$p$ homology Serre spectral sequence for the homotopy fibration \eqref{evaluation}}. By assumption, the first nontrivial differential $d^r$ in the mod-$p$ homology Serre spectral sequence for \eqref{evaluation} satisfies the properties of Lemma~\ref{retract SS}. In particular, $d^r$ is determined by how it acts on the elements in $H_*(BG)$. The $E^{2}$-term of the spectral sequence is $H_*(BG)\otimes H_*(\map^*(S^4,BG))$. Since $z$ is an element of least degree in the kernel of $(g_k)_*$ and is of degree $m$, the map $(g_k)_*$ is an injection in degrees $<m$. Thus the spectral sequence is totally non-homologous to zero in this degree range, implying that it collapses at $E^{2}$. In particular, the differential $d^r$ on an element of $H_*(BG)$ of degree $\leq m$ is zero, and so as $d^r$ satisfies the properties of Lemma \ref{retract SS}, $d^r(x\otimes y)=0$ for $x\in H_t(BG)$ with $t\leq m$ and $y\in H_*(\map^*(S^4,BG))$. Hence, for degree reasons, if $z$ is in the image of $d^r$ then the only possibility is that $z=d^r(x)$ and $r=m+1$ for some $x\in H_{m+1}(BG)$. That is, $z$ is in the image of the transgression. On the other hand, as $z\in\mathrm{Ker}\,g_*$, it cannot survive the spectral sequence and so must be hit by some differential.

  \textit{Property 5: $z$ has odd degree}. By Property 4, in the mod-$p$
  homology Serre spectral sequence for \eqref{evaluation}, we have $z=d^{m+1}(x)$ for some $x\in H_{m+1}(BG)$. As $H_*(BG)$ is concentrated in even degrees, the degree of $z$ must be odd.

  Let's examine the consequences of Properties 1 to 5. Collectively, Properties 1 to 3 imply that $z\in MH_*(\map^*(S^4,BG))$. Property 5 then implies that $z\in MH_{odd}(\map^*(S^4,BG))$. But $g_*$
  is an injection on $MH_{odd}(\map^*(S^4,BG))$ as we saw above, implying in turn that $z\notin\mathrm{Ker}\,g_*$, a contradiction. Thus $g_*$ is an injection, and this
  completes the proof.
\end{proof}

We are ready to prove Theorem \ref{Bgaugehlgy}.

\begin{proof}
  [Proof of Theorem \ref{Bgaugehlgy}]
  By Proposition \ref{injection}, the mod-$p$ homology Serre spectral sequence for \eqref{evaluation} collapses at the $E^2$-term. Thus, as \eqref{evaluation} is the fiberwise coproduct of the fibrations
  $\map^*_k(S^{4},BG)\to\map_k(S^{4},BG)\xrightarrow{ev} BG$
  in \eqref{eval_k} for all $k\in\mathbb{Z}$, it follows from Lemma \ref{collapse component} that the mod-$p$ homology Serre spectral sequence for each fibration collapses at the $E^2$-term. Equivalently, the mod-$p$ homology Serre spectral sequence for
  $\Omega^3 G\langle3\rangle\to B\g_k\xrightarrow{ev} BG$
  collapses at the $E^{2}$-term for all $k\in\mathbb{Z}$.
\end{proof}

We calculate the Poincar\'e series of the mod-$p$ homology of $B\g_k$.
Let $P_t(X)$ denote the Poincar\'e series of the mod-$p$ homology of a space $X$. Let
\begin{align*}
  P(t)&=\prod_{k\ge 0}(1+t^{2(p-1)p^k-1})\prod_{k\ge 1}\frac{1}{1-t^{2(p-1)p^k-2}}\\
  Q_n(t)&=\prod_{k\ge 1,j\ge 0}(1+t^{2(np^k-1)p^j-1})\prod_{k\ge 1,j\ge 1}\frac{1}{1-t^{2(np^k-1)p^j-2}}\prod_{k\ge 0}\frac{1}{1-t^{2n^k-2}}.
\end{align*}

\begin{corollary}
  \label{Poincare_series}
  Under the same hypothesis of Theorem \ref{Bgaugehlgy}, the Poincar\'e series of the mod-$p$ homology of $B\g_k$ is given by
  $$P_{t}(B\g_k)=P(t)Q_{p-1}(t)\prod_{i=2}^l Q_{n_i-1}(t)\prod_{i=1}^l\frac{1}{1-t^{2n_i}}.$$
\end{corollary}

\begin{proof}
By Theorem \ref{Bgaugehlgy}, $P_t(B\g_k)=P_t(BG)P_t(\Omega^3G\langle 3\rangle)$. Since $n_l<p-1$, the mod-$p$ cohomology of $BG$ is a polynomial algebra with generators in dimension $2n_1,\ldots,2n_l$, and so $P_t(BG)=\prod_{i=1}^l\frac{1}{1-t^{2n_i}}$. By \eqref{product_decomp}, $P_t(\Omega^3G\langle 3\rangle)=P_t(\Omega^3S^3\langle 3\rangle)\prod_{i=2}^l P_t(\Omega^3S^{2n_i-1})$. By \eqref{CLMcalc}, \eqref{Thlgy} and \eqref{CLMcalc_2p-1}, $P_t(\Omega^3S^3\langle 3\rangle)=P(t)Q_{p-1}(t)$ and $P_t(\Omega^3S^{2n_i-1})=P_{n_i-1}(t)$, completing the proof.
\end{proof}


\section{The mod-$3$ homology of $SU(2)$-gauge groups}
\label{sec:SU2}

The mod-$p$ homology of $B\g_k$ for $G=SU(2)$ and $p\geq 5$ is given by Theorem \ref{Bgaugehlgy}. Since $SU(2)$-gauge groups are pivotal in Donaldson Theory, in this section we expand beyond the statement of Theorem \ref{Bgaugehlgy} by also calculating the mod-$3$ homology.

Since there is a homeomorphism $SU(2)\cong S^{3}$, we phrase what follows in terms of $S^{3}$. By \eqref{Serrefib} there is a homotopy fibration sequence
$$S^{3}\xrightarrow{\partial_k}\Omega^{3} S^{3}\langle 3\rangle\xrightarrow{g_k}B\g_k\xrightarrow{ev}BS^{3}.$$
By \cite{K} we have the following.

\begin{lemma}
   \label{Kono}
   The map $\partial_1\colon S^{3}\to\Omega^{3} S^{3}\langle 3\rangle$ has order $12$.
\end{lemma}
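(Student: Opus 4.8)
The plan is to first reduce the statement to a computation of a single homotopy group and then to identify $\partial_{1}$ with a Samelson product whose order is classical. Since $S^{3}\langle 3\rangle$ is the $3$-connected cover of $S^{3}$, the map $S^{3}\langle 3\rangle\to S^{3}$ is an isomorphism on $\pi_{n}$ for $n\geq 4$, so that
\[
\pi_{3}(\Omega^{3} S^{3}\langle 3\rangle)\cong\pi_{6}(S^{3}\langle 3\rangle)\cong\pi_{6}(S^{3})\cong\mathbb{Z}/12\mathbb{Z}.
\]
Thus $\partial_{1}$ lives in a cyclic group of order $12$, and proving that it has order $12$ is the same as proving that $\partial_{1}$ is a generator.

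Next I would identify $\partial_{1}$ as the connecting map of the evaluation fibration and express it through a Samelson (equivalently Whitehead) product. Writing $\iota\colon S^{3}\to S^{3}$ for the generator of $\pi_{3}(SU(2))$, the standard analysis of the connecting map for the evaluation fibration over $S^{4}$ identifies the triple adjoint $S^{6}=\Sigma^{3}S^{3}\to S^{3}\langle 3\rangle$ of $\partial_{1}$ with the Samelson product $\langle\iota,\iota\rangle\in\pi_{6}(S^{3})$ (and, for general $k$, $\partial_{k}\simeq k\,\partial_{1}$). Equivalently, under the connecting isomorphism $\partial\colon\pi_{7}(BS^{3})\xrightarrow{\cong}\pi_{6}(S^{3})$, the Samelson product $\langle\iota,\iota\rangle$ corresponds to the Whitehead product $[\hat\iota,\hat\iota]\in\pi_{7}(BS^{3})$, where $\hat\iota\colon S^{4}\to BS^{3}$ generates $\pi_{4}(BS^{3})$. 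It therefore suffices to compute the order of $[\hat\iota,\hat\iota]$.

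For this last step I would pass to $BS^{3}\simeq\mathbb{HP}^{\infty}$ and use naturality of the Whitehead product: since $\hat\iota$ is the inclusion of the bottom cell, $[\hat\iota,\hat\iota]=\hat\iota_{\ast}[\iota_{4},\iota_{4}]$ for the Whitehead square $[\iota_{4},\iota_{4}]\in\pi_{7}(S^{4})$. The classical decomposition $\pi_{7}(S^{4})\cong\mathbb{Z}\{\nu\}\oplus\mathbb{Z}/12\mathbb{Z}\{\Sigma\nu'\}$ together with the identity $[\iota_{4},\iota_{4}]=2\nu-\Sigma\nu'$ (up to sign) is the key input. As the quaternionic Hopf map $\nu$ is precisely the attaching map of the $8$-cell of $\mathbb{HP}^{2}$, we have $\hat\iota_{\ast}(\nu)=0$, so that $[\hat\iota,\hat\iota]=\mp\hat\iota_{\ast}(\Sigma\nu')$. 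Tracing $\Sigma\nu'$ back through $\partial$ shows that $\hat\iota_{\ast}(\Sigma\nu')$ corresponds to the generator $\nu'$ of $\pi_{6}(S^{3})$, whence $\langle\iota,\iota\rangle$ generates $\mathbb{Z}/12\mathbb{Z}$ and $\partial_{1}$ has order $12$.

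The main obstacle is the middle step: pinning down $\partial_{1}$ exactly as the Samelson product $\langle\iota,\iota\rangle$ with the correct normalization, so that the torsion part survives as a full generator rather than as a proper multiple. Concretely, one must verify that the composite $\partial\circ\hat\iota_{\ast}\circ\Sigma$ acts as $\pm\mathrm{id}$ on $\pi_{6}(S^{3})$, so that no factor of $2$ or $3$ is lost when discarding the $\nu$-summand. Once this bookkeeping is settled, the order-$12$ conclusion follows immediately from the classical value of $[\iota_{4},\iota_{4}]$.
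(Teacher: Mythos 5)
The paper offers no proof of this lemma: it is quoted from Kono~\cite{K}, whose argument is in outline the one you follow, namely identify the adjoint of $\partial_{1}$ with the Samelson product $\langle\iota,\iota\rangle\in\pi_{6}(S^{3})\cong\mathbb{Z}/12\mathbb{Z}$ and show it is a generator. Your reduction $\pi_{3}(\Omega^{3}S^{3}\langle 3\rangle)\cong\pi_{6}(S^{3})$, the identification of $\partial_{1}$ with $\langle\iota,\iota\rangle$ via Lang's theorem~\cite{L}, the passage to $[\hat\iota,\hat\iota]=\hat\iota_{*}[\iota_{4},\iota_{4}]$ in $\pi_{7}(\mathbb{H}P^{\infty})$, and the vanishing $\hat\iota_{*}(\nu)=0$ are all correct. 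Moreover the ``main obstacle'' you flag is not actually an obstacle: $\partial\circ\hat\iota_{*}\circ\Sigma=\pm\mathrm{id}$ on $\pi_{6}(S^{3})$ follows from naturality of the connecting map applied to the map of fibrations from $S^{3}\to S^{7}\to S^{4}$ to $S^{3}\to ES^{3}\to BS^{3}$, together with $\partial(\iota_{4})=\pm\iota_{3}$ and the formula $\partial(\Sigma\alpha)=\partial(\iota_{4})\circ\alpha$.

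The genuine gap is in the ``classical value'' you feed into this machine. In Toda's notation~\cite{To}, $\nu'$ generates only the $2$-primary component of $\pi_{6}(S^{3})$, which is $\mathbb{Z}/4\mathbb{Z}$ (one has $2\nu'=\eta_{3}\eta_{4}\eta_{5}$, so $\nu'$ has order $4$, not $12$); correspondingly, $[\iota_{4},\iota_{4}]=\pm(2\nu-\Sigma\nu')$ is a $2$-primary statement, and the torsion subgroup $\mathbb{Z}/12\mathbb{Z}$ of $\pi_{7}(S^{4})$ is generated not by $\Sigma\nu'$ but by $\Sigma(\nu'+\alpha_{1})$, where $\alpha_{1}$ generates the $3$-component $\mathbb{Z}/3\mathbb{Z}$ of $\pi_{6}(S^{3})$. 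Run as written, your argument therefore only shows that $\partial_{1}$ has order $4$ after localization at $2$; the $3$-primary part is untouched, and it is exactly the claim that the torsion part of $[\iota_{4},\iota_{4}]$ generates all of $\mathbb{Z}/12\mathbb{Z}$ --- equivalently, that the $3$-component of $\langle\iota,\iota\rangle$ is nonzero --- that still requires proof. This is a real theorem, not bookkeeping: it amounts to the $3$-local identity $[\iota_{4},\iota_{4}]=2\nu\pm\Sigma\alpha_{1}$, equivalently to the failure of $S^{3}$ to be homotopy-commutative at $p=3$ (McGibbon), and it must be cited or proved separately. Alternatively, if you redefine $\nu'$ to be an order-$12$ generator so that $[\iota_{4},\iota_{4}]=2\nu-\Sigma\nu'$ holds integrally, then that identity already encodes the entire content of the lemma and cannot be invoked as classical background. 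With both the $2$-primary (order $4$) and $3$-primary (order $3$) inputs in place, your framework does correctly assemble them into the order-$12$ conclusion.
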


In the proof of Lemma \ref{partial_k} it is shown that $\partial_k=k\circ\partial_1$. In particular, if we localize at $p=3$ then $\partial_k$ has order $(k,3)$. Observe that for $p=3$, the description of $H_*(\Omega^{3} S^{3}\langle 3\rangle)$ in \eqref{Thlgy} implies that $\Omega^{3} S^{3}\langle 3\rangle$ is $2$-connected with a single cell in dimension $3$.

\begin{lemma}
   \label{partialkimage}
   If $p=3$ and $(k,3)=1$ then the map $\partial_k\colon S^3\to\Omega^{3} S^{3}\langle 3\rangle$ induces an injection in mod-$3$ homology.
\end{lemma}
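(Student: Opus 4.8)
The plan is to reduce the claim to a single statement about bottom-dimensional homology and then read that off from the Hurewicz theorem. Both $S^3$ and $\Omega^3 S^3\langle 3\rangle$ are $2$-connected (the latter by the remark preceding the lemma), and $H_3(S^3;\zmodthree)\cong\zmodthree\cong H_3(\Omega^3 S^3\langle 3\rangle;\zmodthree)$, the latter because $\Omega^3 S^3\langle 3\rangle$ has a single cell in dimension $3$. Since the domain is $\zmodthree$, a map out of it is injective precisely when it is nonzero, so the entire content of the lemma is that $(\partial_1)_*$ does not vanish on $H_3$. By naturality of the Hurewicz homomorphism this is equivalent to $\partial_1$ having nonzero Hurewicz image, and in the $2$-connected range the Hurewicz map records $\partial_1$ itself. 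So the strategy is to first identify $\partial_1$ in $\pi_3(\Omega^3 S^3\langle 3\rangle)$ and then transport this to homology.

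First I would compute $\pi_3(\Omega^3 S^3\langle 3\rangle)$. Using $\pi_3(\Omega^3 X)\cong\pi_6(X)$ together with the fact that the $3$-connected cover $S^3\langle 3\rangle\to S^3$ induces an isomorphism on $\pi_k$ for $k\geq 4$, we get $\pi_3(\Omega^3 S^3\langle 3\rangle)\cong\pi_6(S^3)\cong\mathbb{Z}/12\mathbb{Z}$, which localized at $3$ is $\zmodthree$. Lemma~\ref{Kono} states that $\partial_1$ has order $12$; hence, localized at $3$, it has order $3$ and therefore generates the cyclic group $\pi_3(\Omega^3 S^3\langle 3\rangle)_{(3)}\cong\zmodthree$.

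Next I would apply the Hurewicz theorem with $\mathbb{Z}_{(3)}$-coefficients in both source and target. For $S^3$ it gives $\pi_3\cong H_3(S^3;\mathbb{Z}_{(3)})\cong\mathbb{Z}_{(3)}$, carrying the identity to the fundamental class; for the $2$-connected space $\Omega^3 S^3\langle 3\rangle$ it gives $\pi_3\cong H_3(\Omega^3 S^3\langle 3\rangle;\mathbb{Z}_{(3)})\cong\zmodthree$. By naturality, $(\partial_1)_*$ on $H_3(-;\mathbb{Z}_{(3)})$ sends a generator of $\mathbb{Z}_{(3)}$ onto a generator of $\zmodthree$, so it is surjective. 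Finally I would reduce the coefficients mod $3$: since $H_2(\Omega^3 S^3\langle 3\rangle)=0$, the universal coefficient theorem gives $H_3(-;\zmodthree)\cong H_3(-;\mathbb{Z}_{(3)})\otimes\zmodthree$, under which the above surjection remains nonzero. Thus $(\partial_1)_*$ is nonzero, hence injective, on mod-$3$ homology.

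The main obstacle is the passage from the homotopy-theoretic input of Lemma~\ref{Kono} to a homology statement. It is not enough to know that $\partial_1$ is nontrivial at $3$; one needs that it generates $\pi_3(\Omega^3 S^3\langle 3\rangle)_{(3)}$, which is why the identification of this group as $\zmodthree$ is essential, and one must also verify that reducing from $\mathbb{Z}_{(3)}$- to $\zmodthree$-coefficients does not annihilate the Hurewicz image. Both points are controlled by the $2$-connectivity of $\Omega^3 S^3\langle 3\rangle$, via the Hurewicz isomorphism and the vanishing of $H_2$.
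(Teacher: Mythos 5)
Your proof is correct and takes essentially the same route as the paper: both arguments use Kono's order-$12$ statement to see that $\partial_{1}$ generates the $3$-local group $\pi_{3}(\Omega^{3}S^{3}\langle 3\rangle)\cong\pi_{6}(S^{3})_{(3)}\cong\mathbb{Z}/3\mathbb{Z}$ and then detect this generator on the single $3$-dimensional bottom cell of the $2$-connected space $\Omega^{3}S^{3}\langle 3\rangle$. The only difference is presentational: the paper identifies $\partial_{1}$, up to a unit, with the three-fold adjoint of the bottom-cell inclusion \(\namedright{S^{6}}{\gamma}{S^{3}\langle 3\rangle}\) and notes that a bottom-cell inclusion injects in homology, whereas you carry out the equivalent Hurewicz-naturality and universal-coefficient bookkeeping directly, which amounts to the same detection mechanism.
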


\begin{proof}
  By Lemma \ref{Kono} and the above observation, $\partial_k$ is nontrivial. Thus it represents a nontrivial element in $\pi_{3}(\Omega^{3} S^{3}\langle 3\rangle)\cong\pi_{6}(S^{3}\langle 3\rangle)\cong\pi_{6}(S^{3})$. By \cite{T}, the $3$-component of $\pi_{6}(S^{3})$ is isomorphic to $\Z/3\Z$. Then since $\Omega^3S^3\langle 3\rangle$ is $2$-connected, the Hurewicz map $\pi_3(\Omega^3S^3\langle 3\rangle)\otimes\Z/3\Z\to H_*(\Omega^3S^3\langle 3\rangle)$ is an isomorphism, where we take homology with mod-$3$ coefficients. By the naturality of Hurewicz maps there is a commutative diagram
  $$\xymatrix{\pi_3(S^3)\otimes\Z/3\Z\ar[r]^(.45){(\partial_k)_*}\ar[d]&\pi_3(\Omega^3S^3\langle 3\rangle)\otimes\Z/3\Z\ar[d]\\
  H_3(S^3)\ar[r]^(.45){(\partial_k)_*}&H_3(\Omega^3S^3\langle 3\rangle)}$$
  where the vertical maps are the Hurewicz maps. We saw that the top and the right maps are isomorphisms. The left arrow is an isomorphism by the Hurewicz theorem. Thus the bottom arrow is an isomorphism, completing the proof.
\end{proof}

Next, by \eqref{Thlgy} together with \eqref{CLMcalc} and \eqref{CLMcalc_2p-1}, $H_{3}(\Omega^{3} S^{3}\langle 3\rangle)\cong\Z/3\Z$ and if $x_{3}$ is its generator, then $\Lambda(x_{3})$ is a tensor product factor of $H_*(\Omega^{3} S^{3}\langle 3\rangle)$.

\begin{proof}
  [Proof of Theorem \ref{p=3}]
  Consider the homotopy fibration $S^{3}\xrightarrow{\partial_k}\Omega^{3} S^{3}\langle 3\rangle\xrightarrow{g_k}B\g_k$. By Lemma \ref{partialkimage}, $(\partial_k)_*$ is an injection. Therefore the homology Serre spectral sequence for this homotopy fibration is totally non-homologous to zero, implying that it collapses at the $E^{2}$-term, so $H_*(\Omega^{3} S^{3}\langle 3\rangle)\cong H_*(S^{3})\otimes H_*(B\g_k)$. Since $H_*(S^{3})\cong\Lambda(y_{3})$ for $|y_3|=3$ such that $y_3$ corresponds to $x_3$ and as observed above, $\Lambda(x_{3})$ is a tensor product factor of $H_*(\Omega^{3} S^{3}\langle 3\rangle)$, we obtain the isomorphism asserted in the statement of the Theorem.
\end{proof}


\end{document}